\renewcommand{\/}[1]{\mathbf{\boldsymbol{#1}}}
\numberwithin{equation}{section}
\newcommand{\diag}{\text{diag}}
\newtheorem{Theorem}{Theorem}
\begin{document}

\begin{frontmatter}
\title{Shrinkage estimation with a matrix loss function}
\runtitle{Shrinkage estimation}

\begin{aug}
\author{\fnms{Reman} \snm{Abu-Shanab}\ead[label=e1]{raboshanab@sci.uob.bh}},
\author{\fnms{John T. } \snm{Kent} \corref{} 
\ead[label=e2]{j.t.kent@leeds.ac.uk}
\ead[label=u2,url]{http://www.maths.leeds.ac.uk/~john}}
\and
\author{\fnms{William E.} \snm{Strawderman}
\ead[label=e3]{straw@stat.rutgers.edu}
\ead[label=u3,url]{http://www.stat.rutgers.edu/people/faculty/straw.html}}
\affiliation{University of Bahrain, University of Leeds and Rutgers University}
\address{Department of Mathematics\\ University of Bahrain \\P.O.Box  32038\\
Kingdom of Bahrain \\ \printead{e1}}
\address{Department of Statistics\\ University of Leeds\\ 
Leeds LS2 9JT, UK \\ \printead{e2}}
\address{Department of Statistics \\561 Hill Center, Busch Campus \\
Rutgers University \\ Piscataway NJ 08854-8019 \\ \printead{e3}}
\runauthor{R. Abu-Shanab et al.}
\end{aug}

\begin{abstract}
  Consider estimating the n by p matrix of means of an n by p
  matrix of independent normally distributed observations with
  constant variance, where the performance of an estimator is
  judged using a p by p matrix quadratic error loss function. A matrix
  version of the James-Stein estimator is proposed, depending on a
  tuning constant. It is shown to dominate the usual maximum
  likelihood estimator for some choices of of the tuning constant when
  n is greater than or equal to 3.  This result also extends to other
  shrinkage estimators and settings.
\end{abstract}

\begin{keyword}[class=AMS]
\kwd[Primary ]{62H12}
\kwd[; secondary ]{62F10}
\end{keyword}
\begin{keyword}
\kwd{cross-product inequality}
\kwd{James-Stein estimation}
\kwd{multivariate normal distribution}
\kwd{squared error loss}
\end{keyword}

\end{frontmatter}

\section{Introduction}

Shrinkage estimators are usually set in the context of \emph{vector}
data.  If $\/x (n \times 1)$ is a random vector with mean $\/\theta$,
then a shrinkage estimator of $\/\theta$ takes the form
\begin{equation}
\label{eq:vec-shrink-est}
\hat{\/\theta}_a = \/x - a \/g(\/x; u)
\end{equation}
where $a>0$ is a tuning parameter, and $\/g(\/x,u) \ (n \times 1)$ is
a ``shrinkage function'', depending on the data $\/x$, and possibly on
extra information in an auxiliary random variable (or random vector)
$u$.  Let $F(\/x,u)$ denote the joint distribution of $\/x$ and $u$,
depending on $\/\theta$.  The classic James-Stein estimator
(\citealp{Stein}; \citealp{JSE}) is a special case in the setting $\/x \sim
N_n(\/\theta, \sigma^2I_n), \ n \geq 3$.  When $\sigma^2$ is known,
the shrinkage function is given by
\begin{equation}
\label{eq:js-shr-known}
\/g(\/x) = \sigma^2 (n-2)\/x/||\/x||^2.
\end{equation}
When $\sigma^2$ is unknown, the shrinkage function is given by
\begin{equation}
\label{eq:js-shr-unknown}
\/g(\/x) =  \{u/(\nu+2)\} (n-2)\/x/||\/x||^2,
\end{equation}
where $u \sim \sigma^2 \chi^2_\nu$ is an auxiliary random variable
independent of $\/x$ which is used to estimate $\sigma^2$.

The objective in shrinkage estimation is to estimate the vector
parameter $\/\theta$, where the performance of an estimator
$\hat{\/\theta} = \hat{\/\theta}(\/x)$ is judged by the scalar loss
function
\begin{equation}
\label{eq:vector-loss}
L_{\text{scalar}}(\hat{\/\theta}, \/\theta) = 
\sum_{i=1}^n (\hat{\theta}_i - \theta_i)^2
\end{equation}
and associated risk function $R_{\text{scalar}}(\hat{\/\theta},
\/\theta) = E_F\{L_{\text{scalar}}(\hat{\/\theta}, \/\theta)\}$.

In order to guarantee that the shrinkage estimator dominates the
simple unbiased estimator $\hat{\/\theta}_0 = \/x$, the usual strategy
is to demonstrate the ``cross-product inequality''
\begin{equation}
\label{eq:cross-prod}
E_F\{(\/x-\/\theta)^T\/g\} \geq E_F(\/g^T\/g) >0,
\end{equation}
for all $\/\theta$, where $\/g = \/g(\/x,u)$ is a function of the
random vector $\/x$ (and of $u$ when present).  The last inequality
has been included to ensure that $\/g$ is nontrivial.  Throughout the
paper we assume that $\/x$ and $\/g(\/x,u)$ have finite second moments.
Then the following well-known result holds.

\begin{Theorem} Let $\/x (n \times 1)$ be a random vector and $u$ be
  an auxiliary random variable such that $E(\/x) = \/\theta$ under a
  probability model $F$ depending on $\/\theta$.  Also suppose there
  exists a shrinkage function $\/g = \/g(\/x,u)$ such that the
  cross-product inequality (\ref{eq:cross-prod}) holds.  Then the
  shrinkage estimator $\hat{\/\theta}_a$ in (\ref{eq:vec-shrink-est})
  dominates the simple estimator $\hat{\/\theta}_0 = \/x$ under the
  scalar loss function (\ref{eq:vector-loss}) provided the tuning
  parameter $a$ satisfies $0 < a < 2$.
\end{Theorem}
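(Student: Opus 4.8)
The plan is to expand the scalar quadratic loss of $\hat{\/\theta}_a$ directly, take expectations, and then read off the conclusion from the cross-product inequality, which reduces the whole matter to a one-line estimate on a quadratic in the tuning parameter $a$.

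First I would write, for an arbitrary fixed $\/\theta$,
\begin{equation*}
L_{\text{scalar}}(\hat{\/\theta}_a, \/\theta) = \| (\/x - \/\theta) - a\/g \|^2 = \|\/x - \/\theta\|^2 - 2a (\/x - \/\theta)^T \/g + a^2 \/g^T \/g .
\end{equation*}
Since $\/x$ and $\/g = \/g(\/x,u)$ have finite second moments, each of the three terms is integrable, so taking expectations under $F$ and subtracting the risk of $\hat{\/\theta}_0 = \/x$ gives
\begin{equation*}
R_{\text{scalar}}(\hat{\/\theta}_a, \/\theta) - R_{\text{scalar}}(\hat{\/\theta}_0, \/\theta) = -2a\, E_F\{(\/x - \/\theta)^T \/g\} + a^2 E_F(\/g^T \/g) .
\end{equation*}
Next, writing $A = E_F\{(\/x - \/\theta)^T \/g\}$ and $B = E_F(\/g^T \/g)$, the cross-product inequality (\ref{eq:cross-prod}) states that $A \geq B > 0$. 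Because $a > 0$ we then have $-2aA \leq -2aB$, hence
\begin{equation*}
R_{\text{scalar}}(\hat{\/\theta}_a, \/\theta) - R_{\text{scalar}}(\hat{\/\theta}_0, \/\theta) \leq -2aB + a^2 B = a(a - 2) B .
\end{equation*}
For $0 < a < 2$ the factor $a(a-2)$ is strictly negative and $B > 0$, so the right-hand side is strictly negative; since this bound holds for every $\/\theta$, the estimator $\hat{\/\theta}_a$ dominates $\hat{\/\theta}_0$.

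I do not expect a genuine obstacle here: the only points needing a word of justification are the integrability used to split the expectation (covered by the standing finite-second-moment assumption) and the fact that the improvement must be uniform in $\/\theta$, which is exactly why (\ref{eq:cross-prod}) is assumed to hold for all $\/\theta$. The real work in applications of this theorem lies in verifying the cross-product inequality for concrete shrinkage functions such as (\ref{eq:js-shr-known}) and (\ref{eq:js-shr-unknown}), but that is not part of the present statement.
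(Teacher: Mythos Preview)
Your argument is correct and follows essentially the same route as the paper's own proof: expand the quadratic loss, take expectations to obtain $-2a\delta + a^2\gamma$ (your $A,B$ are the paper's $\delta,\gamma$), use $\delta\geq\gamma>0$ to bound the cross term, and observe that the resulting quadratic in $a$ is negative on $(0,2)$. The only cosmetic difference is that you work with the risk difference $R(\hat{\/\theta}_a)-R(\hat{\/\theta}_0)$ while the paper bounds $R(\hat{\/\theta}_a)$ directly.
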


\begin{proof}
Write $\delta = E_F\{(\/x-\/\theta)^T\/g\}$ and $\gamma = 
E_F(\/g^T\/g)$, so $\delta \geq \gamma > 0$.  Then the risk takes the form
\begin{equation}
\label{eq:shr-proof}
\begin{split}
R_{\text{scalar}}(\hat{\/\theta}_a,
\/\theta) &= E_F \{(\/x-\/\theta-a\/g)^T(\/x-\/\theta-a\/g)\}\\
&= E_F \{(\/x-\/\theta)^T(\/x-\/\theta)\} -2a \delta + a^2 \gamma\\
&\leq E_F \{(\/x-\/\theta)^T(\/x-\/\theta)\} -2a \gamma + a^2 \gamma \\
&< E_F \{(\/x-\/\theta)^T(\/x-\/\theta)\}=
R_{\text{scalar}}(\hat{\/\theta}_0,\/\theta)
\end{split} \end{equation}
provided $0 < a < 2$.
\end{proof}

For the James-Stein estimator, Stein's Lemma \citep{Stein1} states
that the cross-product inequality for known $\sigma^2$ holds for $n
\geq 3$ and is actually an equality. That is, if $\/x \sim
N_n(\/\theta, \sigma^2 I_n),\  n \geq 3$, then
\begin{equation}
\label{eq:stein}
\sigma^2 E \left[\{\/x^T\/(\/x-\/\theta)/||\/x||^2\right\} =
(n-2)  \sigma^4 E \left\{1/|| \/x||^2\right\}= \sigma^2 A,\,
\text{say},
\end{equation}
where $A = A(\lambda^2)$ depends on $\lambda^2 = \/\theta^T\/\theta /
\sigma^2$ and $0<A<\infty$.  Stein's Lemma can be proved using
integration by parts (e.g. \citet{Efron} or \citet{Stein1}).  An
equality also holds in the analogue of (\ref{eq:stein}) for the
unknown $\sigma^2$ case since $E(u) = \nu \sigma^2, \ E(u^2) = \nu
(\nu+2) \sigma^2$ in (\ref{eq:js-shr-unknown}). Hence Theorem 1
for the James-Stein estimator, in both the known and unknown
$\sigma^2$ cases, can be strengthened to conclude that the optimal
value of the tuning constant is $a=1$, uniformly over all $\/\theta$.

The purpose of this paper is to extend James-Stein and other shrinkage
estimators to a matrix setting where the data take the form of an $n
\times p$ matrix $X$ with mean $E(X) = \Theta$.  The objective is to
estimate $\Theta$ using a $p \times p$ matrix quadratic loss
function,
\begin{equation}
\label{eq:matrix-loss}
L_{\text{matrix}}(\hat{\Theta},\Theta)=
\{\hat{\Theta}-\Theta\}^T \{\hat{\Theta}-\Theta\},
\end{equation}
and  associated risk function $R_{\text{matrix}}(\hat{\Theta},\Theta) =
E\{L_{\text{matrix}}(\hat{\Theta},\Theta)\}$.
Note that this loss function pools errors across the $n$ rows, but treats
the $p$ columns separately.  Hence we look for an estimator which 
shrinks across the $n$ rows, but does not shrink across columns.

Other authors have considered the use of shrinkage methods in a matrix
setting; see, e.g. \citet{EM,Efron,Haff,Zhe,May,Ts7,Ts9}. 
However, these papers 
use a scalar squared error loss
function and so are not directly relevant here.  There seems to be
little work focused on a matrix loss function.  

Section 2 states the main result in the matrix setting, with some
discussion given in Section 3.

\section{Matrix data}
Suppose the data take the form of an $n \times p$ matrix $X$, plus
auxiliary random variables $\/u = (u_1, \ldots, u_p)^T$, when present.
Let $\Theta = E_F(X)$ denote the $n \times p$ matrix of means, where
$F$ denotes the joint distribution of $X$ and $\/u$.  The objective is
to estimate $\Theta$ under the $p \times p$ matrix quadratic loss
function (\ref{eq:matrix-loss}).  Let $\/x_{(j)}$ denote the $j$th column
of $X$.

Suppose that for each column $j=1, \ldots, p$, there
is a shrinkage function $\/g_{(j)} = \/g_{(j)}(\/x_{(j)},u_j)$.
A natural estimator is the ``diagonal shrinkage estimator'', defined
by applying the vector shrinkage estimator separately to each column
of $X$.  That is, define  $\hat{\Theta}_a =
\hat{\Theta}_a(X)$ in terms of its columns $\hat{\/\theta}_{a,(j)}$ by
\begin{equation}
\label{DS-columns}
\hat{\/\theta}_{a,(j)} = \hat{\/\theta}_a(\/x_{(j)},u_j)
\end{equation}
using (\ref{eq:vec-shrink-est}).  Note that the shrinkage applied to
each column does not depend on the data in other columns.  We use the
term ``diagonal'' because in the setting (\ref{eq:js-shr-known}) the
estimator can also be written in matrix form using a diagonal matrix,
$$
\hat{\Theta}_a = XD, \quad D=\diag(d_j), \quad
d_j = 1-a \sigma^2 (n-2)/||\/x_{(j)}||^2, \quad j=1, \ldots, p.
$$
Given two estimators $\hat{\Theta}^{(1)}$ and $\hat{\Theta}^{(2)}$
depending on $X$, say that $\hat{\Theta}^{(1)}$ strictly dominates
$\hat{\Theta}^{(2)}$ if
$R_{\text{matrix}}(\hat{\Theta}^{(1)},\Theta) <
R_{\text{matrix}}(\hat{\Theta}^{(2)},\Theta)$ for all $\Theta$,
where ``$<$'' means that the difference between the right- and
left-hand sides is a positive-definite matrix.  The following theorem
is the main result of this paper.

\begin{Theorem}

  Let $\/X (n \times p)$ be a random matrix and $\/u = (u_1, \ldots,
  u_p)^T$ be a vector of auxiliary random variables such that $E_F(\/X)
  = \Theta$ under a probability model $F$ depending on $\Theta$,
  and the data  $\{\/x_{(j)},u_j\}$ are independent for
  different $j$.  Suppose there exist shrinkage functions $\/g_{(j)}
  = \/g_{(j)}(\/x_{(j)},u_j)$ such that the cross-product inequality
  (\ref{eq:cross-prod}) holds for each $j=1,\ldots, p$.  Then the
  shrinkage estimator $\hat{\Theta}_a$ in (\ref{DS-columns}) dominates the
  simple estimator $\hat{\Theta}_0 = \/X$ under the matrix loss
  function (\ref{eq:matrix-loss}) provided the tuning parameter $a$
  satisfies $0 < a < 2/p$.
\end{Theorem}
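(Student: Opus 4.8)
The plan is to show directly that the difference of risk matrices
$$\Delta = R_{\text{matrix}}(\hat{\Theta}_0,\Theta) - R_{\text{matrix}}(\hat{\Theta}_a,\Theta)$$
is positive definite, by checking that $\/v^T\Delta\/v>0$ for every nonzero $\/v\in\mathbb{R}^p$. Write $E = X-\Theta$, with columns $\/e_{(j)} = \/x_{(j)}-\/\theta_{(j)}$, and let $G$ be the $n\times p$ matrix with columns $\/g_{(j)}$, so that $\hat{\Theta}_a-\Theta = E-aG$ by (\ref{DS-columns}). Expanding the matrix loss (\ref{eq:matrix-loss}) gives $L_{\text{matrix}}(\hat{\Theta}_a,\Theta) = E^TE - a(E^TG+G^TE) + a^2G^TG$, and hence
$$\Delta = a\,E_F\{E^TG+G^TE\} - a^2 E_F\{G^TG\}.$$
Fixing $\/v$ and setting $\/\xi = E\/v = \sum_j v_j\/e_{(j)}$ and $\/\eta = G\/v = \sum_j v_j\/g_{(j)}$ reduces the claim to the scalar inequality $2a\,E_F\{\/\xi^T\/\eta\} - a^2 E_F\{\|\/\eta\|^2\} > 0$, which is of the same shape as the quantity controlled in the proof of Theorem 1 — except that now $\/\eta$ is a $\/v$-weighted combination of the $p$ columnwise shrinkage functions rather than a single shrinkage vector.

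Next I would use the column independence to evaluate the two expectations. Since $E_F(E)=0$ and the pair $\{\/x_{(j)},u_j\}$ is independent of $\{\/x_{(k)},u_k\}$ for $j\neq k$, every cross term $E_F\{\/e_{(j)}^T\/g_{(k)}\}$ with $j\neq k$ vanishes, so $E_F\{\/\xi^T\/\eta\} = \sum_j v_j^2\delta_j$ where $\delta_j = E_F\{\/e_{(j)}^T\/g_{(j)}\}$. For the second expectation the off-diagonal terms do not vanish: independence only gives $E_F\{\/g_{(j)}^T\/g_{(k)}\} = \/\mu_{(j)}^T\/\mu_{(k)}$ for $j\neq k$, where $\/\mu_{(j)} = E_F\{\/g_{(j)}\}$. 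Writing $\gamma_j = E_F\{\|\/g_{(j)}\|^2\}$ and $\bar{\/\mu} = \sum_j v_j\/\mu_{(j)}$, regrouping the double sum yields $E_F\{\|\/\eta\|^2\} = \|\bar{\/\mu}\|^2 + \sum_j v_j^2(\gamma_j - \|\/\mu_{(j)}\|^2)$, so that
$$\/v^T\Delta\/v = 2a\sum_j v_j^2\delta_j - a^2\|\bar{\/\mu}\|^2 - a^2\sum_j v_j^2(\gamma_j - \|\/\mu_{(j)}\|^2).$$

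Finally I would chain three estimates, taking care that each is applied to a term of the right sign. First, the cross-product inequality (\ref{eq:cross-prod}) gives $\delta_j\geq\gamma_j>0$, so the leading term is at least $2a\sum_j v_j^2\gamma_j$. Second, Jensen's inequality gives $\|\/\mu_{(j)}\|^2\leq\gamma_j$, so the subtracted quantities $\gamma_j-\|\/\mu_{(j)}\|^2$ are nonnegative and may in turn be bounded above after extracting the first term. Third, the triangle inequality followed by Cauchy–Schwarz gives $\|\bar{\/\mu}\|^2 \leq \big(\sum_j|v_j|\,\|\/\mu_{(j)}\|\big)^2 \leq p\sum_j v_j^2\|\/\mu_{(j)}\|^2$. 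Substituting these and again using $\|\/\mu_{(j)}\|^2\le\gamma_j$ collapses everything to $\/v^T\Delta\/v \geq a(2-ap)\sum_j v_j^2\gamma_j$, which is strictly positive for $\/v\neq 0$ precisely when $0<a<2/p$, since each $\gamma_j>0$.

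The main obstacle — and the reason the admissible range is $(0,2/p)$ here rather than the range $(0,2)$ of Theorem 1 — is the appearance of the nonvanishing cross-column terms $\/\mu_{(j)}^T\/\mu_{(k)}$ in $E_F\{\|\/\eta\|^2\}$. Column by column the relevant quadratic form in $\/g$ is controlled termwise exactly as in the scalar proof, but off the diagonal the columnwise shrinkage functions are genuinely correlated through their means, and bounding the resulting positive-semidefinite matrix $(\/\mu_{(j)}^T\/\mu_{(k)})_{j,k}$ costs a factor $p$ via Cauchy–Schwarz. Equivalently the whole argument can be run in the Loewner order: $E_F\{G^TG\}$ equals $(E_FG)^T(E_FG)$ plus a nonnegative diagonal correction, $(E_FG)^T(E_FG)\preceq p\,\diag(\|\/\mu_{(j)}\|^2)$, and $E_F\{E^TG+G^TE\}=2\,\diag(\delta_j)$, so $\Delta\succeq a(2-ap)\,\diag(\gamma_j)\succ 0$. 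Verifying whether the constant $2/p$ can be improved under the stated hypotheses would be the natural thing to pursue afterwards.
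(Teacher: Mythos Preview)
Your argument is correct and follows the same overall architecture as the paper's: reduce the matrix domination statement to a scalar inequality via an arbitrary vector, use column independence together with $E_F(\/e_{(j)})=\/0$ to kill the off-diagonal terms in $E_F\{E^TG\}$, bound $E_F\{\|G\/v\|^2\}$ by $p\sum_j v_j^2\gamma_j$ using Cauchy--Schwarz, and finish with the cross-product inequality $\delta_j\geq\gamma_j>0$ to get $\/v^T\Delta\/v\geq a(2-ap)\sum_j v_j^2\gamma_j$.

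The one genuine difference lies in how the bound on $E_F\{\|G\/v\|^2\}$ is obtained. The paper applies Cauchy--Schwarz \emph{pointwise} inside the expectation, via the elementary inequality $\bigl\|\sum_j v_j\/g_{(j)}\bigr\|^2 \leq p\sum_j v_j^2\|\/g_{(j)}\|^2$, and only then takes expectations; independence plays no role in this step. You instead exploit independence first to write $E_F\{\|G\/v\|^2\}$ exactly as $\|\bar{\/\mu}\|^2 + \sum_j v_j^2(\gamma_j-\|\/\mu_{(j)}\|^2)$, and then bound the mean part by Cauchy--Schwarz and invoke Jensen ($\|\/\mu_{(j)}\|^2\leq\gamma_j$) to collapse everything. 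Both routes land on the same inequality $E_F\{\|G\/v\|^2\}\leq p\sum_j v_j^2\gamma_j$, so the admissible range $0<a<2/p$ is identical. The paper's route is shorter and shows that independence is needed only for the $E^TG$ cross terms; your decomposition is a bit more work but has the merit of isolating exactly where the off-diagonal part of $E_F\{G^TG\}$ lives---namely in $(E_FG)^T(E_FG)$---which could be useful if one later tried to sharpen the factor $p$ in specific models.
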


\begin{proof}
The proof makes use of the following inequality, where $\/\alpha$ is 
a $p \times 1$ vector and $G$ is an $n \times p$ matrix with columns
$\/g_{(j)}, \ j=1, \ldots, p$,
\begin{equation}
\label{eq:cs}
\begin{split}
\sum_{j,k=1}^p \alpha_j \alpha_k \/g_{(j)}^T \/g_{(k)}
&\leq \sum_{j,k=1}^p |\alpha_j |\; |\alpha_k| \; || \/g_{j}|| \;
|| \/g_{(k)}|| \\
&= \left\{\sum_{j=1}^p |\alpha_j|\; ||\/g_{(j)}|| \right\}^2 \\
&\leq p \sum_{j=1}^p \alpha_j^2 ||\/g_{(j)}||^2.
\end{split} \end{equation} The two inequalities follow from two
versions of the Cauchy-Schwarz inequality..

To show $\hat{\Theta}_a$ dominates $\hat{\Theta}_0 = X$ for a
particular choice of $a$, we need to show that
 $$R_{\text{matrix}}(\hat{\Theta}_a,\Theta)<
R_{\text{matrix}}(\hat{\Theta}_0,\Theta)
\text{  for all  } \Theta.
$$ 
Equivalently we need to show that 
\begin{equation}\label{lft}
\/\alpha^T R \/\alpha < n \/\alpha^T I_n \/\alpha = 
n \text{  for all  } \Theta,
\end{equation}
where $R = R_{\text{matrix}}(\hat{\Theta}_a,\Theta)$ and
$\/{\alpha}$ is an arbitrary standardized  $p$-dimensional vector,
$\/\alpha^T \/\alpha = 1$.

The left-hand side of (\ref{lft}) can be written as
\begin{equation}
\label{eq:mjs-bound}
\begin{split}
 &\sum_{j,k=1}^p  \alpha_j \alpha_k E \left\{
\left(\hat{\/\theta}_{a,(j)}-\/\theta_{(j)}\right)^T
\left(\hat{\/\theta}_{a,(k)}-\/\theta_{(k)}\right)
 \right\} \\
&=\sum_{j,k=1}^p \alpha_j \alpha_k E \left[
\left\{\left(\/x_{(j)}-\/\theta_{(j)}\right)
-  a\/g_{(j)}\right\}^T 
\left\{\left(\/x_{(k)}-\/\theta_{(k)}\right)- 
a\/g_{(k)}\right\} \right] \\
&=\sum_{j=1}^p \alpha_j^2  \left[
E \left\{\left(\/x_{(j)}-\/\theta_{(j)}\right)^T
\left(\/x_{(j)}-\/\theta_{(j)}\right)\right\}-
2 a \delta_j\right] +
a^2 \sum_{j,k=1}^p \alpha_j \alpha_k E 
\left(\/g_{(j)}^T \/g_{(k)}\right)\\
&\leq \sum_{j=1}^p \alpha_j^2 \left[
E \left\{\left(\/x_{(j)}-\/\theta_{(j)}\right)^T
\left(\/x_{(j)}-\/\theta_{(j)}\right)\right\}-
2 a \delta_j + a^2 p\gamma_j \right] \\
&\leq \sum_{j=1}^p \alpha_j^2 \left[
E \left\{\left(\/x_{(j)}-\/\theta_{(j)}\right)^T
\left(\/x_{(j)}-\/\theta_{(j)}\right)\right\}-
2 a \gamma_j + a^2 p\gamma_j \right] \\
&< \/\alpha^T R_0 \/\alpha = n, 
\end{split} \end{equation} for $0 < a < 2/p$, where $\delta_j =
E_F\{(\/x_{(j)}-\/\theta_{(j)})^T\/g_{(j)}\}$ and $\gamma_j =
E_F(\/g_{(j)}^T\/g_{(j)})$, so $\delta_j \geq \gamma_j > 0$.  In going
from the second to the third line of (\ref{eq:mjs-bound}) notice that
many of the off-diagonal terms vanish because the different columns
are independent and $E(\/x_{(j)}-\/\theta_{(j)}) = \/0$.  The fourth
line follows from the third line by the Cauchy-Schwarz based
inequality (\ref{eq:cs}). The last line follows from the fifth line by
simple properties of quadratic functions.
\end{proof}

Comments
\begin{enumerate}
\item[(a)] The allowable interval for $a$ decreases with $p$.  This
  property is related to the result that for a matrix loss function,
  it is harder to dominate the maximum likelihood estimator than for a
  scalar loss function.

\item[(b)] For the James-Stein case, the $p$-dimensional result is
  less powerful than the one-dimensional result.  In one dimension
  $a=1$ is optimal; $\hat{\/\theta}_1$ dominates $\hat{\/\theta}_a$
  for all other choices of $a$.  In contrast, if $p>1$ there is no
  single choice of $a$ for $\hat{\Theta}_a$ which dominates all other
  choices.

\item[(c)] Further, at least for the James-Stein case, the interval $(0,
  2/p)$ is the best possible interval for $a$.  If $a<0$ or $a>2/p$,
  it is possible to find values of $\Theta$ such that
  $\hat{\Theta}_a$ does not dominate $\hat{\Theta}_0$.

  Here is a simple construction in the case of known variance
  $\sigma^2=1$.  Recall $x_{ij} \sim N(\theta_{ij},1)$ independently
  for $i=1, \ldots, n, \ j=1, \ldots, p$.  Let $\alpha_j = 1/\sqrt{p},
  \ j=1, \ldots, p$.  Let $\/\theta^*$ be a $n$-vector of unit size,
  $\/\theta^{*T} \/\theta^{*} = 1$, and suppose all of the columns of
  $\Theta$ are equal to the same multiple of $\/\theta^*$,
  $\/\theta_{(j)} = \kappa \/\theta^*$.  For large $\kappa$ it is
  straightforward to show that
$$
\delta_j = \gamma_j = E(\/g_{(j)}^T \/g_{(j)}) = (m^2/\kappa^2) 
+ O(1/\kappa^4)
$$
for all $j$, where $m=n-2$.  Further (\ref{eq:cs}) becomes an equality
in this setting so that the risk in (\ref{lft}) reduces to
\begin{equation}
\label{eq:counterexample}
\/\alpha^T R \/\alpha  =n - 2a(m^2/\kappa^2) +
a^2(m^2/\kappa^2)p + O(1/\kappa^4).
\end{equation} 
Ignoring the remainder term, the quadratic function of $a$ in
(\ref{eq:counterexample}) is less than $n$ for $0 < a < 2/p$ and
exceeds $n$ for $a<0$ or $a>2/p$.  Hence for any specific choice of
$a<0$ or $a>2/p$, $\/\alpha ^T R \/\alpha>n$ for sufficiently large
$\kappa$.

The same argument works for the case of unknown $\sigma^2$.

\item[(d)] In the vector case, if the shrinkage function $\/g$ is
  re-scaled to $c \/g$ for some constant $c>0$, then the cross-product
  inequality needs minor adjustment and the allowable interval for the
  tuning parameter $a$ changes from $(0,2)$ to $(0,2/c)$.  The scaling
  convention for the cross-product inequality chosen in this paper has
  been made to make the treatment of different columns as consistent
  as possible in the extension to the matrix case.

\item[(e)] \citet{EM} proposed the ``matrix'' James-Stein estimator
$$
\hat{\/\theta}^{MJS} = X\{I_p - (n-p-1)S^{-1}\}, \quad S=X^TX,
$$
and investigated its properties under the scalar loss function
(\ref{eq:vector-loss}).  However, its properties under the matrix loss
function (\ref{eq:matrix-loss}) are unknown.

\end{enumerate}

\section{Discussion}
For the classic vector James-Stein estimator there are several
ingredients in the formulation of the problem and the estimator such
as the following: (a) normality of the data, (b) uncorrelated
components, (c) the specific choice (\ref{eq:js-shr-known}) for the
shrinkage function $\/g$, and (d) the assumption that the range of
possible values for $\/\theta$ spans all of $\mathbb{R}^n$.

Each of these ingredients can be relaxed, either individually or in
combination.  Here are some examples.
\begin{enumerate}
\item[(a)] relax normality to (i) more general spherical distributions \\
  \citep{brandwein-strawderman91, cellier-fourdrinier-95}
  or (ii) independent components \citep{shinozaki84};

\item[(b)] allow correlated normal or more general elliptic
  distributions \\ \citep{fourdrinier-strawderman-wells06};

\item[(c)] use other shrinkage estimators such as (i) subspace
  shrinkage, or more generally (ii) Bayes or generalized Bayes
  estimators based on superharmonic prior distributions
  \citep{Stein1};

\item[(d)] relax the range of possible values for
  $\/\theta$ from all of $\mathbb{R}^n$ to a specified cone 
  \citep{fourdrinier-strawderman-wells06}.
\end{enumerate}

In each case the improved performance of the shrinkage estimator is
justified by a version of the cross-product inequality.  Hence in each
case there is an immediate extension  to the matrix case.

Another direction in which the paper might be extended is to allow
dependence between the columns.  At least in the normal case with a
known $p \times p$ covariance matrix $\Sigma$, it is possible to adapt
the results of this paper.

Thus let $X (n \times p)$ follow an $np$-dimensional normal
distribution with mean $E(X) = \Theta$, with independent rows and with
common covariance matrix $\Sigma$ within each row.  Let $A$ be a matrix
square root of $\Sigma^{-1}$, so that $AA^T = \Sigma^{-1}$.  Then
$Y=XA$ has independent columns.  Hence the methodology of Section 2
can be applied to $Y$ to yield an estimator $\hat{\Phi}_a$ of $\Phi =
\Theta A$.  Back-transforming yields an estimator $\hat{\Theta}_a =
\hat{\Phi}_a A^{-1}$ which dominates $\hat{\Theta}_0$ in the matrix
sense (\ref{eq:matrix-loss}), provided $0<a<2$.

It is not clear to what extent these results carry over when $\Sigma$
needs to be estimated.  Further, note that $A$ is only defined up to a
multiplication on the left by a $p \times p$ orthogonal matrix.  Thus the
methodology of Section 2 defines a whole family of estimators, each
with the same statistical properties.  It is not clear whether it
might be possible to combine them in some way to yield a superior
estimator.
\bibliographystyle{imsart-nameyear}
\bibliography{bibliography6}  
\end{document}